\newtheorem{prop}{Theorem}[section]
\newtheorem{lemma}[prop]{Lemma}
\newtheorem{corollary}[prop]{Corollary}
\newtheorem{definition}[prop]{Definition}
\newcommand{\clc}{\cdot\ldots\cdot}
\newcommand{\olo}{\otimes\ldots\otimes}
\newcommand{\wlw}{\wedge\ldots\wedge}
\newcommand{\nn}{\mathbb{N}}
\newcommand{\zz}{\mathbb{Z}}
\newcommand{\cc}{\mathbb{C}}
\newcommand{\C}[1]{\mathcal{#1}}
\newcommand{\T}[1]{\textrm{#1}}
\newcommand{\E}[1]{\emph{#1}}
\newcommand{\F}[1]{\mathbf{#1}}
\newcommand{\fork}[2]{\left\{ \begin{array}{#1} #2 \end{array} \right.} 
\newcommand{\sep}[1]{& \T{#1 } &}
\newcommand{\arr}[2]{\begin{array}{#1} #2 \end{array}}
\newcommand{\q}{\qquad}
\newcommand{\diff}[2]{\frac{\partial{#1}}{\partial{#2}}}
\newcommand{\ccyc}[2]{C^{\lambda}_{#2}(#1)}
\newcommand{\hcyc}[2]{HC_{#2}(#1)}
\newcommand{\kcyc}[2]{Z^{\lambda}_{#2}(#1)}
\newcommand{\cech}[1]{\check{#1}}
\begin{document}
 \title{Comparison of secondary invariants of algebraic $K$-theory}

\author{J. Kaad}
\thanks{email:
    \texttt{kaad@math.ku.dk}}
\maketitle
\vspace{-10pt}
\centerline{ Department of Mathematical Sciences, University of Copenhagen}
\centerline{ Universitetsparken 5, DK-2100 Copenhagen, Denmark}

\bigskip
\vspace{8pt} 

\centerline{\textbf{Abstract}} 
In this paper we prove that the multiplicative character of A. Connes and
M. Karoubi and the determinant invariant of L. G. Brown, J. W. Helton and
R. E. Howe agree up to a canonical homomorphism.   

\vspace{-8pt}
\tableofcontents
\newpage
\section{Introduction}
In their paper, \cite{ConKar}, A. Connes and M. Karoubi define a multiplicative
character on algebraic $K$-theory 
\[
\C M_F : K_{2p}(\C A ) \to \cc / (2\pi i)^p \zz 
\]
for each odd unital $2p$-summable Fredholm module $(F,H)$ over the unital
$\cc$-algebra $\C A$. The construction uses the relative $K$-groups of a
unital Banach algebra and the relative Chern character with values in
continuous cyclic homology.

In a different direction, L.G. Brown has defined a determinant invariant 
\[
d_{(X,\iota)} : K_2(\C B) \to \cc^* 
\]
for each exact sequence of $\cc$-algebras
\[
\begin{CD}
X : 0 @>>> \C L^1(H) @>i>> \C E @>\pi>> \C B @>>> 0 
\end{CD}
\]
equipped with a unital algebra homomorphism $\iota : \C E \to \C L(H)$ such
that
\begin{equation}\label{eq:inccond}
\begin{array}{ccc}
(\iota \circ i)(T) = T
& \T{for all }
& T \in \C L^1(H)
\end{array}
\end{equation}
The construction uses the Fredholm determinant homomorphism 
\[
\arr{ccc}{
\T{det} : \C G \to \cc^* & \q & e^T \mapsto e^{\T{Tr}(T)}
}
\]
Here $\C G \subseteq GL(\C L(H))$ denotes the operators of determinant
class. See \cite{LBrown, LBrown2}. 

In the case where $\C B$ is commutative, the determinant invariant is related
to the work of J. W. Helton and R. E. Howe on traces of commutators,
\cite{HH}, via the identity
\[
\arr{ccc}{
d_{(X,\iota)} \{\pi(e^S),\pi(e^T)\} = e^{\T{Tr}[S,T]} & \q & S,T \in \C E
}
\]
Here $\{\pi(e^S),\pi(e^T)\} \in K_2(\C B)$ denotes the Steinberg
symbol. Notice also the appearance of the determinant invariant in the papers
of R. W. Carey and J. D. Pincus. See \cite{Carey1, Carey2}.   

The purpose of this paper is to show that the multiplicative character and the
determinant invariant coincide up to a canonical homomorphism. In particular,
to each odd unital $2$-summable Fredholm module $(F,H)$ over a unital
$\cc$-algebra $\C A$ we associate an extension of a certain $\cc$-algebra $\C
B$ by the operators of trace class
\begin{equation}\label{eq:exsecconspec}
\begin{CD}
X_F : 0 @>>> \C L^1(H) @>i>> \C E @>\pi>> \C B @>>> 0
\end{CD}
\end{equation}
an injective homomorphism $\iota : \C E \to \C L(H)$ satisfying
\eqref{eq:inccond}, and a surjective homomorphism 
\[
R_F : \C A \to \C B
\]
The main result of the paper can then be expressed as the commutativity of the
diagram  
\begin{equation}\label{eq:mainrecomm}
 \dgARROWLENGTH=0.6\dgARROWLENGTH
\begin{diagram}
\node{K_2(\C A)} \arrow{e,t}{(R_F)_*} \arrow{s,l}{M_F}
     \node{K_2(\C B)} \arrow{s,r}{d_{(X_F,\iota)}} \\
\node{\cc/(2\pi i)\zz} \arrow{e,b}{\exp}
     \node{\cc^*}
\end{diagram}
\end{equation}
It follows that the multiplicative character admits calculation in the
commutative case, and that it factorizes through the second algebraic $K$-group
of the Calcin algebra $\C C^1(H) = \C L(H)/\C L^1(H)$. 

Using the theory of central extensions we can state the main result in a
different fashion. In \cite[Paragraph $5$]{ConKar} it is shown that the
universal multiplicative character
\[
\C M_U : K_2(\C M^1) \cong H_2(E(\C M^1)) \to \cc/(2\pi i)\zz \cong \cc^*
\]
is induced by a central extension 
\begin{equation}\label{eq:conext}
\begin{CD}
\varphi: 1 @>>> \cc^* @>>> \Gamma @>>> E(\C M^1) @>>> 1
\end{CD}
\end{equation}
On the other hand, the universal determinant invariant
\[
d : K_2(E(\C C^1(H))) \cong H_2(E(\C C^1(H))) \to \cc^*
\]
could be understood as the homomorphism induced by the central extension 
\begin{equation}\label{eq:detext}
\begin{CD}
\psi : 1 @>>> \cc^* @>>> \widetilde{\Gamma} @>E(q)>> E(\C C^1(H)) @>>> 1
\end{CD}
\end{equation}
where $\widetilde{\Gamma} = GL(\C L(H))/\C G_0$ with $\C G_0$ defined as the
normal subgroup consisting of operators of determinant one. The main theorem
then translates into the following statement: The elements in group cohomology
with coefficients in $\cc^*$ determined by the central extensions
\eqref{eq:conext} and \eqref{eq:detext} coincide up to a homomorphism
\[
\arr{ccc}{
E(R)^*(\psi) = \varphi 
& \q & 
E(R)^* : H^2(E(\C C^1(H)),\cc^*) \to H^2(E(\C M^1),\cc^*)  
}
\]
We thus obtain refinements of results given in \cite{Segal}. Here we have
applied the universal coefficient Theorem together with the perfectness of the
elementary matrices.

The present paper is organized as follows.

In the first two sections we recall the construction of the determinant
invariant and the multiplicative character. Our definition of the determinant
invariant differs slightly from the one given in \cite{LBrown, LBrown2}. We
use a description of the relative homology groups of the surjective homomorphism 
\[
E(q) : E(\C L(H)) \to E(\C C^1(H))
\]
in terms of an algebraic analogue of the $\check{\T{C}}$ech complex of an open cover
of a topological space. See \cite{Inassa, Levine}. A combinatorial argument can be given,
proving that the two definitions yield the same map. The alternative approach
seems to be essential for the working of our main proof. 

In the last section we give a proof of the main result. There are two main
ingredients. The first ingredient is a factorization result for the
homomorphism 
\[
\arr{ccc}{
\tau_1 : HC_1(\C M^1) \to \cc & \q & (x_0,x_1) \mapsto
-\frac{1}{4}\T{Tr}(F_U[F_U,x_0][F_U,x_1]) 
}
\]
through a relative continuous cyclic homology group. This result is
essentially contained in the proof of \cite[Theorem $5.6$]{ConKar}. The
second ingredient is an application of the (well known) description of the
Fredholm determinant using a logarithm. To be precise, for each smooth map 
\[ 
\arr{ccc}{
\sigma : [0,1] \to \C G & \q & \sigma(0) = 1
}
\]
we can calculate the Fredholm determinant of the endpoint $\sigma(1) \in \C
G$, using the integral 
\[
\T{Tr}(\int_0^1 \frac{d\sigma}{dt} \cdot \sigma^{-1}dt) \in \cc
\]
We use this observation in a relative setting, invoking the algebraic
$\check{\T{C}}$ech complex a second time. These considerations lead directly
to a proof of the main result. 

We would like to end this introduction by mentioning that even though the
determinant invariant has no obvious generalization to the higher $K$-groups, 
we are still able to calculate the multiplicative character in this
setting, obtaining higher dimensional analogues of the identity 
\[
(\exp \circ \C M_F)\{e^T, e^S\} = \exp(\T{Tr}[PTP,PSP])
\]
This will be the subject of a forthcoming paper. 

{\bf Acknowledgements:} I would like to thank Ryszard Nest for his continuous support and
many helpful comments.  

\section{The determinant invariant of an extension by $\C
  L^1(H)$}\label{detkdef} 
The main subject of this section is the definition of the determinant
invariant. Our definition relies on considerations in relative homology, which
will find application throughout the paper. We will therefore start by a
formulation of these ideas in a general context before passing on to the
definition of the determinant invariant. 

\subsection{The $\check{\T{C}}$ech complex of a surjective simplicial map}\label{ccech}
Let $X$ and $Y$ be simplicial sets and suppose that we have a
\emph{surjective} simplicial map $f : X \to Y$. We let $\T{Ker}_*(f)$ denote
the kernel chain complex of the associated chain map $f : \zz[X_*] \to
\zz[Y_*]$. Furthermore we let $\cech{C}(f)$ denote the bicomplex with 
\[
\cech{C}_{nm}(f) = \zz[\underbrace{X_n \times_{Y_n} \ldots \times_{Y_n} X_n}_{m+2}]
\]
The generators are thus $(m+2)$-tuples of elements in $X_n$,
$(x_1,\ldots,x_{m+2})$, which coincide in $Y_n$, $f(x_1)=\ldots =
f(x_{m+2})$. The differentials are given by  
\[
\arr{ccc}{
d : \cech{C}_{nm}(f) \to \cech{C}_{(n-1)m}(f) 
& \q & d(x_1,\ldots,x_{m+2}) = \sum_{i=0}^n(-1)^i(d_i(x_1),\ldots,d_i(x_{m+2}))
}
\]
and 
\[
\arr{ccc}{
\delta : \cech{C}_{nm}(f) \to \cech{C}_{n(m-1)}(f)
& \q & \delta(x_1,\ldots,x_{m+2}) 
= \sum_{j=1}^{m+2}(-1)^{j+1}(x_1,\ldots, \cech{x}_j,\ldots,x_{m+2})
}
\]
Here the $\cech{\cdot}$ signifies that the term has to be omitted. We then
have a long exact sequence of abelian groups 
\[
 \dgARROWLENGTH=0.5\dgARROWLENGTH
\begin{diagram}
\node{\zz[X_n]} 
\node{\zz[Y_n]} \arrow{w,t}{f} 
\node{\cech{C}_{n0}(f)} \arrow{w,t}{\varepsilon}
\node{\cech{C}_{n1}(f)} \arrow{w,t}{\delta} 
\node{\cech{C}_{n2}(f)} \arrow{w,t}{\delta}
\node{\ldots} \arrow{w,t}{\delta}
\end{diagram}
\]
for each $n \in \nn_0$. Here the chain map 
\[
\varepsilon : \cech{C}_{*0}(f) \to \zz[Y_*]
\]
is given by $\varepsilon : (x_1,x_2) \mapsto x_1 - x_2$. In particular,
letting $\T{Coker}_*(\delta_f)$ denote the cokernel chain complex of the chain
map 
\[
\delta : \cech{C}_{*1}(f) \to \cech{C}_{*0}(f)
\]
we have the result 

\begin{prop}\label{isomrelg}\cite{Levine}
For each $n \in \nn_0$ we have the isomorphisms 
\[
H_n(f) \cong H_n(\E{Ker}(f)) \cong H_n(\E{Coker}(\delta_f)) \cong H_n(\cech{C}(f))
\]
Here $H_n(f)$ denotes the relative homology of the simplicial map $f : X \to Y$
\end{prop}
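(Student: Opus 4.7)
The plan is to establish a chain of isomorphisms rooted in the kernel complex $\T{Ker}(f)$, with the row-exactness of the augmented \v{C}ech complex (the long exact sequence displayed immediately before the proposition) as the key computational input.

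To obtain $H_n(f) \cong H_n(\T{Ker}(f))$, I would use that since $f$ is a surjective simplicial map, the induced chain map $f_* : \zz[X_*] \to \zz[Y_*]$ is surjective, giving a short exact sequence of chain complexes
\[
0 \to \T{Ker}(f) \to \zz[X_*] \to \zz[Y_*] \to 0.
\]
Combining this with the mapping-cone definition of the relative homology of a simplicial map yields a natural quasi-isomorphism between the cone of $f_*$ and $\T{Ker}(f)$, with any degree shift absorbed into the indexing convention for $H_n(f)$.

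The heart of the argument is the map $\varepsilon : \cech{C}_{n0}(f) \to \zz[X_n]$, $(x_1,x_2) \mapsto x_1 - x_2$, which lands in $\T{Ker}(f)_n$ (because $f(x_1) = f(x_2)$) and assembles into a chain map $\cech{C}_{*0}(f) \to \T{Ker}(f)$ commuting with the simplicial differential $d$. The stated row-exactness of the sequence $\ldots \xrightarrow{\delta} \cech{C}_{n1}(f) \xrightarrow{\delta} \cech{C}_{n0}(f) \xrightarrow{\varepsilon} \T{Ker}(f)_n \to 0$ then immediately promotes $\varepsilon$ to a chain-level isomorphism $\T{Coker}(\delta_f) \cong \T{Ker}(f)$, so that $H_n(\T{Coker}(\delta_f)) \cong H_n(\T{Ker}(f))$. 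Should one need the row-exactness from scratch, I would partition $X_n = \sqcup_{y \in Y_n} f^{-1}(y)$ and split the augmented complex as a direct sum over $y$ of reduced simplicial chain complexes of the full simplex on the nonempty vertex set $f^{-1}(y)$ (each contractible), exhibiting the contracting homotopy $h(x_1,\ldots,x_{m+2}) = (s(f(x_1)), x_1, \ldots, x_{m+2})$ for any set-theoretic section $s : Y_n \to X_n$ of $f$.

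For the final isomorphism $H_n(\cech{C}(f)) \cong H_n(\T{Ker}(f))$, I would run the spectral sequence of the bicomplex $\cech{C}(f)$ obtained by first taking $\delta$-homology along rows: by the row-exactness each row collapses in \v{C}ech-degree zero to $\T{Ker}(f)_n$, the residual differential on this column is the simplicial $d$ restricted to $\T{Ker}(f)$, and the spectral sequence degenerates at $E^2 = H_*(\T{Ker}(f))$. The main obstacle I expect is bookkeeping: reconciling any shift in the mapping-cone identification with the author's indexing of $H_n(f)$, and verifying that the spectral-sequence convergence matches $H_n(\cech{C}(f))$ in the correct total degree — the surviving column sits at \v{C}ech-degree zero, so total degree coincides with simplicial degree $n$ and no off-by-one error appears.
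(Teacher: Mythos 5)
Your proposal is correct and reconstructs the standard argument; the paper itself supplies no proof, citing \cite{Levine}. Your identification $H_n(f)\cong H_n(\T{Ker}(f))$ via the mapping cone for surjective chain maps, the row exactness obtained from a contracting homotopy built out of a set-theoretic section of $f$ (equivalently, the fiberwise splitting over $Y_n$), and the row-filtration spectral sequence collapsing to the $m=0$ column are exactly the right tools, and the degree bookkeeping works out as you anticipate. You also correctly read through a typo in the paper: the augmentation $\varepsilon:(x_1,x_2)\mapsto x_1-x_2$ lands in $\zz[X_*]$ (indeed in $\T{Ker}(f)_*$), not in $\zz[Y_*]$ as printed, so the first two nodes of the displayed long exact row should be transposed.
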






\subsection{The determinant invariant}
Let $\C C^1(H) = \C L(H)/\C L^1(H)$ denote the Calkin algebra. We then have
the quotient map $q : \C L(H) \to \C C^1(H)$ and an induced surjective group
homomorphism $E(q) : E(\C L(H)) \to E(\C C^1(H))$. Here $E(A)$ denotes the
elementary matrices of a unital ring $A$. 

We define the determinant homomorphism on the abelian group $\cech{C}_{10}(E(q))$ by 
\[
\arr{ccc}{
\T{det} : \zz[E(\C L(H)) \times_{E(\C C^1(H))} E(\C L(H))] \to \cc^* 
& \q & (g_1,g_2) \mapsto \T{det}(g_1g_2^{-1})
}
\]
Note that $g_1g_2^{-1}$ is of determinant class since $E(q)(g_1)=E(q)(g_2)$. 

\begin{lemma}
The determinant 
\[
\E{det} :  \zz[E(\C L(H)) \times_{E(\C C^1(H))} E(\C L(H))] \to \cc^*
\]
induces a map on homology 
\[
\E{det} : H_1(\E{Coker}(\delta_{E(q)})) \to \cc^*
\]
\end{lemma}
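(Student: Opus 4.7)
My plan is to reduce the lemma to two cocycle-type identities for the Fredholm determinant. By construction of $\T{Coker}(\delta_{E(q)})$ and its induced differential $\bar d$, the homomorphism $\T{det}$ descends to a well-defined map on $H_1(\T{Coker}(\delta_{E(q)}))$ precisely when it kills (i) the image of $\delta : \cech{C}_{11}(E(q)) \to \cech{C}_{10}(E(q))$, so that it passes to $\T{Coker}(\delta_{E(q)})_1$, and (ii) the image of $d : \cech{C}_{20}(E(q)) \to \cech{C}_{10}(E(q))$, so that the induced map annihilates the $1$-boundaries in the cokernel. The three properties of the Fredholm determinant I will invoke throughout are multiplicativity $\T{det}(xy) = \T{det}(x) \T{det}(y)$ for $x, y \in \C G$, conjugation invariance $\T{det}(g x g^{-1}) = \T{det}(x)$ for $g \in GL(\C L(H))$ and $x \in \C G$, and the inclusion $\ker(E(q)) \subseteq \C G$ coming from $\ker(q) = \C L^1(H)$, which places any invertible element of $1 + \C L^1(H)$ in the domain of determinant class operators.

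The $\delta$-verification (i) is immediate: for a triple $(g_1, g_2, g_3)$ with common image in $E(\C C^1(H))$, one has $\delta(g_1, g_2, g_3) = (g_2, g_3) - (g_1, g_3) + (g_1, g_2)$, and multiplicativity collapses $\T{det}(g_2 g_3^{-1}) \cdot \T{det}(g_1 g_3^{-1})^{-1} \cdot \T{det}(g_1 g_2^{-1})$ to $\T{det}(g_2 g_3^{-1} \cdot g_3 g_1^{-1} \cdot g_1 g_2^{-1}) = 1$. For the $d$-verification (ii), I use the standard bar-construction face maps $d_0(a, b) = b$, $d_1(a, b) = ab$, $d_2(a, b) = a$ on the $2$-simplices of $E(\C L(H))$, which give $d((a_1, b_1), (a_2, b_2)) = (b_1, b_2) - (a_1 b_1, a_2 b_2) + (a_1, a_2)$. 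The key algebraic rewrite is
\[
a_1 b_1 b_2^{-1} a_2^{-1} = (a_1 a_2^{-1}) \cdot a_2 (b_1 b_2^{-1}) a_2^{-1},
\]
both factors of which lie in $\C G$: $a_1 a_2^{-1}$ and $b_1 b_2^{-1}$ lie in $\ker(E(q)) \subseteq \C G$, and $\C G$ is preserved under conjugation by $GL(\C L(H))$. Multiplicativity and conjugation invariance then reduce $\T{det}(a_1 b_1 b_2^{-1} a_2^{-1})$ to $\T{det}(a_1 a_2^{-1}) \cdot \T{det}(b_1 b_2^{-1})$, and substitution collapses the entire $d$-boundary expression to $1$.

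The main subtlety is not the algebraic manipulation itself but the verification that at each step the operators fed to $\T{det}$ genuinely lie in $\C G$. Once the background properties of Brown's Fredholm determinant -- multiplicativity, conjugation invariance, and the identification of $\ker(E(q))$ as a subgroup of $\C G$ -- are in place, both verifications above are direct, and the induced map $\T{det} : H_1(\T{Coker}(\delta_{E(q)})) \to \cc^*$ is well-defined.
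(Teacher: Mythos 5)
Your proof is correct and takes essentially the same approach as the paper's: the paper reduces the lemma to exactly the two determinant identities your verifications (i) and (ii) encode (a cocycle identity coming from the $\delta$-boundary and a multiplicativity identity coming from the $d$-boundary) and then simply cites well-known properties of the Fredholm determinant, whereas you make the derivation explicit — in particular the conjugation rewrite $a_1 b_1 b_2^{-1} a_2^{-1} = (a_1 a_2^{-1})\,a_2(b_1 b_2^{-1})a_2^{-1}$ and the checks that each factor lies in $\C G$.
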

\begin{proof}
This is a matter of checking the equality
\[
\arr{ccc}{
\T{det}(g_1g_2^{-1})\cdot \T{det}(g_2g_3^{-1}) = \T{det}(g_1g_3^{-1}) 
\sep{for} E(q)(g_1) = E(q)(g_2) = E(q)(g_3)
}
\]
and the equality 
\[
\T{det}(g_1g_2^{-1})\T{det}(h_1h_2^{-1}) = \T{det}(g_1h_1h_2^{-1}g_2^{-1})
\]
for $E(q)(g_1)=E(q)(g_2)$ and $E(q)(h_1) = E(q)(h_2)$. But they follow easily
from well known properties of the Fredholm determinant, \cite{Simon, Wasser}.  
\end{proof}

By Theorem \ref{isomrelg}, the surjectivity of $E(q) : E(\C L(H)) \to E(\C
C^1(H))$ implies that 
\[
\varepsilon : H_1(\T{Coker}(\delta_{E(q)})) \to H_1(E(q))
\]
is an isomorphism. Furthermore, by \cite[Proposition $1.2.1$]{Loday} we can
identify the abelian groups $K_2(\C C^1(H))$ and $H_2(E(\C C^1(H)))$. We can
thus define the universal determinant invariant as the composition
\[
d = \T{det} \circ \partial : K_2(\C C^1(H)) \cong H_2(E(\C C^1(H))) \to \cc^*
\]
Here $\partial : H_2(E(\C C^1(H))) \to H_1(E(q))$ is the boundary map
associated with the group homomorphism $E(q) : E(\C L(H)) \to E(\C C^1(H))$.

Now, suppose that we have an exact sequence of $\cc$-algebras 
\[
\begin{CD}
X : 0 @>>> \C L^1(H) @>i>> \C E @>\pi>> \C B @>>> 1
\end{CD}
\]
equipped with a unital algebra homomorphism $\iota : \C E \to \C L(H)$ such
that
\[
\arr{ccc}{
(\iota \circ i)(T) = T 
\sep{for all}
T \in \C L^1(H)
}
\]
We then have an induced homomorphism $\iota : \C B \to \C C^1(H)$ which by
functoriality of algebraic $K$-theory yields a homomorphism 
\[
\iota_* : K_2(\C B) \to K_2(\C C^1(H))
\]
We define the determinant invariant of the pair $(X,\iota)$ as the composition
\[
d_{(X,\iota)} = d \circ \iota_* : K_2(\C B) \to \cc^*
\]

\section{The multiplicative character of a finitely summable Fredholm
  module}\label{multchar} 
In this section we recall the construction of the multiplicative character
associated with an odd, finitely summable Fredholm module. To this end we will
describe the relative Chern character and the relative $K$-groups in detail.
The references for this section are \cite{ConKar, Karoubi} and
\cite{Tillmann}. Note that the cyclic homology and Hochschild homology groups
encountered in this section, and throughout the paper are the non-Hausdorff
versions of \emph{continuous} cyclic homology and \emph{continuous} Hochschild
homology groups.

\subsection{The relative Chern character}\label{relchernsec}
Let $A$ be a unital Banach algebra. We let  
\[
\Delta^n = \{t \in [0,1]^n\, |\, \sum_{i=1}^nt_i \leq 1 \}
\]
denote the standard $n$-simplex. The vertices will be denoted by
$\F{0},\ldots,\F{n} \in \Delta^n$. 

Let $R(A)$ denote the simplicial set with 
\[
R(A)_n = \{\sigma \in GL(C^\infty(\Delta^n,A)) \, | \, \sigma(\F{0})=1 \}
\]
in degree $n \in \nn_0$ and with face operators and degeneracy operators
defined by 
\[
\begin{split}
& d_i(\sigma)(t_1,\ldots,t_{n-1}) = 
\fork{ccc}{
\sigma(1-\sum_{j=1}^{n-1}t_j,t_1,\ldots,t_{n-1})\cdot \sigma(\F{1})^{-1} 
\sep{for} 
j = 0 \\
\sigma(t_1,\ldots,t_{i-1},0,t_i,\ldots,t_{n-1})
\sep{for} j \in \{1,\ldots,n\} \\
} \\
& s_j(\sigma)(t_1,\ldots,t_{n+1}) = 
\fork{ccc}{
\sigma(t_2,\ldots,t_{n+1}) 
\sep{for} j = 0 \\
\sigma(t_1,\ldots,t_{i-1},t_i + t_{i+1},\ldots,t_{n+1}) 
\sep{for} j \in \{1,\ldots,n\} 
}
\end{split}
\]
Remark the extra factor $\sigma(\F{1})^{-1}$ in the expression for $d_0$. 

Let $|R(A)|$ denote the geometric realization of the simplicial set
$R(A)$. The fundamental group is then given by 
\[
\pi_1(|R(A)|) = R(A)_1/\sim
\] 
where $\sim$ denotes the equivalence relation given by smooth homotopies with
fixed endpoints. See \cite{May}. Since the commutator subgroup is perfect and
normal, we can apply the plus construction of D. Quillen, obtaining the pointed
$CW$-complex $|R(A)|^+$. See \cite{Quillen}.

\begin{definition}\cite{Karoubi}
By \emph{the relative} $K$-\emph{theory} of $A$ we will understand the
homotopy groups of $|R(A)|^+$ thus by definition 
\[
K_n^{\E{rel}}(A) = \pi_n(|R(A)|^+)
\]
for each $n \in \nn_0$. 
\end{definition}

By \cite{Karoubi} the relative $K$-groups fit in a long exact sequence 
\begin{equation}\label{eq:xseqk}
 \dgARROWLENGTH=0.5\dgARROWLENGTH
\begin{diagram}
\node{\ldots } \arrow{e,t}{i}
     \node{K_{n+1}^{\T{top}}(A)} \arrow{e,t}{v}
          \node{K_n^{\T{rel}}(A)} \arrow{e,t}{\theta}
               \node{K_n(A)} \arrow{s,r}{i} \\
               \node{\ldots } 
          \node{K_{n-1}(A)} \arrow{w,b}{i} 
     \node{K_{n-1}^{\T{rel}}(A)} \arrow{w,b}{\theta}     
\node{K_n^{\T{top}}(A)} \arrow{w,b}{v}
\end{diagram}
\end{equation}
which terminates at $K_1^{\T{top}}(A)$. Here $\theta : K_n^{\T{rel}}(A) \to
K_n(A)$ is induced by the simplicial map  
\[
\arr{ccc}{
\theta : R_n(A) \to GL(A)^n  
& \q & \theta(\sigma) =
(\sigma(\F{0})\sigma(\F{1})^{-1},\ldots,\sigma(\F{n-1})\sigma(\F{n})^{-1}) 
}
\]
Furthermore, in \cite{ConKar} it is shown that the long exact sequence
\eqref{eq:xseqk} is related to the continuous $SBI$-sequence in homology by
means of Chern characters 
\[
 \dgARROWLENGTH=0.5\dgARROWLENGTH
\begin{diagram}
\node{\q \ldots } \arrow{e,b}{i}    
     \node{K_{n+1}^{\T{top}}(A)} \arrow{e,t}{v} \arrow{s,l}{\T{ch}^{\T{top}}_{n+1}} 
          \node{K_n^{\T{rel}}(A)} \arrow{e,t}{\theta}
          \arrow{s,l}{\frac{(-1)^n}{(n-1)!}\T{ch}_n^{\T{rel}}}
               \node{K_n(A)} \arrow{e,t}{i} \arrow{s,l}{\frac{1}{n!} D_n}
                    \node{K_n^{\T{top}}(A)} \arrow{e,t}{v} \arrow{s,l}{\T{ch}^{\T{top}}_n}
                         \node{\ldots \q}   \\
\node{\q \ldots} \arrow{e,b}{I}
     \node{HC_{n+1}(A)} \arrow{e,b}{S} 
          \node{HC_{n-1}(A)} \arrow{e,b}{\frac{1}{n}B}
               \node{HH_n(A)} \arrow{e,b}{I}
                    \node{HC_n(A)} \arrow{e,b}{S}
                         \node{\ldots \q} 
\end{diagram}
\]
We will give a precise description of the relative Chern character 
\[
\T{ch}_n^{\T{rel}} : K_n^{\T{rel}}(A) \to HC_{n-1}(A)
\]
as the composition of three homomorphisms. The
first one is the Hurewicz homomorphism  
\[
h_n : K_n^{\T{rel}}(A) \to H_n(|R(A)|^+) \cong H_n(R(A))
\]
The second one is the logarithm 
\[
L : H_n(R(A)) \to
\lim_{m\to \infty} HC_{n-1}(M_m(A))
\]
which is induced by 
\begin{equation}\label{eq:logarithm}
L : \sigma \mapsto \frac{1}{n}
                \sum_{s\in \Sigma_n}\T{sgn}(s) 
                     \int_{\Delta^n} \diff{\sigma}{t_{s(1)}}\cdot \sigma^{-1} \olo
                                    \diff{\sigma}{t_{s(n)}}\cdot \sigma^{-1} 
                                         dt_1\wlw dt_n
\end{equation}
for each smooth map $\sigma : \Delta^n \to GL_m(A)$. The last one is the
generalized trace on continuous cyclic homology 
\[
\T{TR} : \lim_{m\to \infty} HC_{n-1}(M_m(A)) \to HC_{n-1}(A)
\]

\begin{definition}\cite{ConKar,Karoubi}
By \emph{the relative Chern character}
\[
\E{ch}_n^{\E{rel}} : K_n^{\E{rel}}(A) \to HC_{n-1}(A)
\]
we will understand the composition 
\begin{equation}\label{eq:chrel}
\E{ch}_n^{\E{rel}} = \E{TR} \circ L \circ h_n
\end{equation}
of the Hurewicz homomorphism, the logarithm and the generalized trace. 
\end{definition}





\subsection{The multiplicative character}
Let $H$ be a separable Hilbert space of infinite dimension. For each $p \in
\nn$ let $\C M^{2p-1}$ denote the unital $\cc$-subalgebra of $\C L(H\oplus H)$
consisting of operators of the form
\[
\left(
\begin{array}{cc}
x_{11} & x_{12} \\
x_{21} & x_{22} 
\end{array}
\right) \in \C L(H\oplus H)
\] 
with $x_{12},x_{21}\in \C L^{2p}(H)$ in the $2p^{\T{th}}$ Schatten ideal. The
$\cc$-algebra $\C M^{2p-1}$ becomes a unital Banach algebra when equipped with
the norm
\[
\arr{ccc}{
\|x\| = \|x\|_\infty + \|\,[F_U,x]\,\|_{2p} 
& \q & 
F_U = \left( \arr{cc}{
1 & 0 \\
0 & -1 
}
\right)
}
\]
Here the norms $\|\cdot \|_\infty$ and $\|\cdot\|_{2p}$ are the operator norm
and the norm on the $2p^{\T{th}}$ Schatten ideal respectively. For details on
the Schatten ideals we refer to \cite{Simon}. 

The continuous linear map  
\[
\arr{ccc}{
\tau_{2p-1} : \ccyc{\C M^{2p-1}}{2p-1} \to \cc 
& &
(x_0,\ldots,x_{2p-1}) \mapsto 
- \frac{1}{2^{2p}(p-1)!} \T{Tr}(F_U[F_U,x_0]\clc [F_U,x_{2p-1}])
}
\]
determines a continuous cyclic cocycle and consequently a homomorphism
\[
\tau_{2p-1} : HC_{2p-1}(\C M^{2p-1}) \to \cc
\]
For details we refer to \cite{Connesgeom}. Pre-composition with the relative
Chern character thus yields a homomorphism 
\[
\tau_{2p-1} \circ \T{ch}_{2p}^{\T{rel}} : K_{2p}^{\T{rel}}(\C M^{2p-1}) \to \cc
\]

In \cite{ConKar} it is shown that $K_{2p}(\C M^{2p-1})$ is the cokernel of 
\[
v : K_{2p+1}^{\T{top}}(\C M^{2p-1}) \to K_{2p}^{\T{rel}}(\C M^{2p-1})
\]
from the exact sequence \eqref{eq:xseqk}. Furthermore it is shown that the
image of the homomorphism 
\[
\tau_{2p-1} \circ \T{ch}_{2p}^{\T{rel}} \circ v : K_{2p+1}^{\T{top}}(\C
M^{2p-1}) \to \cc
\]
equals the additive subgroup $(2\pi i)^{p}\zz $ of $\cc$. By consequence
we get a homomorphism 
\[
\C M_U : K_{2p}(\C M^{2p-1})\cong \T{Coker}(v) \to \cc/(2\pi i)^p \zz
\]
This is the odd universal multiplicative character.

Now, to each odd unital $2p$-summable Fredholm $(F,H)$ over a unital
$\cc$-algebra $\C A$ we can associate a unital algebra homomorphism $ \rho_F :
\C A \to \C M^{2p-1} $ which by functoriality of algebraic $K$-theory yields a
homomorphism
\[
(\rho_F)_* : K_{2p}(\C A) \to K_{2p}(\C M^{2p-1})
\]
The multiplicative character of the Fredholm module $(F,H)$ over $\C A$ is
then defined as the composition  
\[
M_F = M \circ (\rho_F)_* : K_{2p}(\C A) \to \cc/(2\pi i)^{p}\zz
\]





\section{Comparing the determinant invariant and the multiplicative character}    

\subsection{Factorization through relative continuous cyclic
  homology}\label{factor} 
In this section we show that the homomorphism 
\[
\tau_1 : HC_1(\C M^1) \to \cc
\]
factorizes through a relative continuous cyclic homology group, associated with
an extension of $\C M^1$ by the operators of trace class. This result can be
found in condensed form in the proof of \cite[Theorem $5.6$]{ConKar}. 

Let $H$ be a separable Hilbert space and let $\C M^1$ be the Banach algebra
considered in Section \ref{multchar}. Let $P$ be the projection 
\[
P=\left(
\begin{matrix}
1 & 0 \\
0 & 0
\end{matrix}
\right) \in \C L(H\oplus H)
\]
and let $q : \C L(H)\to \C L(H)/\C L^1(H)= \C C^1(H)$ be the quotient map. We
then have a unital algebra homomorphism
\[
\arr{ccc}{
R : \C M^1\to \C C^1(H) & \q & R : x \mapsto q(PxP)
}
\]

Let $\C T^1\subseteq \C L(H)\times \C M^1$ be the unital $\cc$-subalgebra such
that $(S,x)\in \C T^1$ precisely when $S-PxP\in \C L^1(H)$. The diagram
\begin{equation}\label{eq:commpisec}
 \dgARROWLENGTH=0.5\dgARROWLENGTH
\begin{diagram}
\node{\C T^1} \arrow{e,t}{\pi_2} \arrow{s,l}{\pi_1} 
\node{\C M^1} \arrow{s,r}{R} \\
\node{\C L(H)} \arrow{e,b}{q} \node{\C L(H)/\C L^1(H)} 
\end{diagram}
\end{equation}
is thus a commutative diagram of unital $\cc$-algebras. Here $\pi_1$ and
$\pi_2$ are the projections given by $\pi_1(S,x)=S$ and $\pi_2(S,x)=x$.

The $\cc$-algebra $\C T^1$ becomes a unital Banach algebra when equipped with
the norm
\[
\|(S,x)\|=\|PxP-S\|_1 + \|x\|_\infty + \|[2P-1,x]\|_2 
\]
It fits in the short exact sequence of Banach algebras 
\begin{equation}\label{eq:banexa}
\begin{CD}
0 @>>> \C L^1(H) @>i>> \C T^1 @>\pi_2>> \C M^1 @>>> 0
\end{CD} 
\end{equation}
which has a continuous linear section 
\[
\arr{ccc}{
s : \C M^1 \to \C T^1 & \q & x \mapsto (PxP,x)
}
\]

The existence of the continuous linear right inverse to $\pi_2$ yields an
isomorphism 
\[
HC_*(\pi_2) \cong HC_*(\T{Ker}(\pi_2))
\]
between the relative continuous cyclic homology of $\pi_2$ and the homology of
the kernel complex associated with the chain map 
\[
(\pi_2)_* : C_*^\lambda(\C T^1) \to C_*^\lambda(\C M^1)
\]
In particular, each element in $HC_0(\pi_2)$ can be represented by an operator
of trace class 
\[
\begin{array}{ccc}
(S,0) \in \C T^1
& \quad 
& S \in \C L^1(H)
\end{array}
\]

\begin{lemma}\label{reltrace}
The trace 
\[
\arr{ccc}{
\E{Tr} : \E{Ker}(\pi_2) \to \cc & \q & (S,0) \mapsto \E{Tr}(S)
}
\]
induces a map on relative continuous cyclic homology 
\[
T : HC_0(\pi_2) \to \cc
\]
\end{lemma}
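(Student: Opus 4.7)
The plan is to invoke the isomorphism $HC_0(\pi_2) \cong HC_0(\T{Ker}(\pi_2))$ provided by the continuous linear section $s$, and then to verify that the degree-zero map $\T{Tr} : \T{Ker}(\pi_2)_0 \to \cc$ annihilates the image of the Hochschild boundary $b : \T{Ker}(\pi_2)_1 \to \T{Ker}(\pi_2)_0$. Since $\T{Ker}(\pi_2)_0$ consists precisely of the operators $(S,0) \in \C T^1$ with $S \in \C L^1(H)$, such a vanishing furnishes the desired homomorphism $T : HC_0(\pi_2) \to \cc$.

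The key device is the Banach-space decomposition $\C T^1 = i(\C L^1(H)) \oplus s(\C M^1)$ induced by $s$. It yields a splitting of $\C T^1 \grad \C T^1$ into four summands on three of which $(\pi_2 \grad \pi_2)$ vanishes, while on $s(\C M^1) \grad s(\C M^1)$ it is a topological isomorphism. Given a class $\zeta \in \T{Ker}(\pi_2)_1 \subset \ccyc{\C T^1}{1}$, I first pick any lift $\tilde z \in \C T^1 \grad \C T^1$; its image $(\pi_2 \grad \pi_2)(\tilde z)$ then lies in $(1-\lambda)(\C M^1 \grad \C M^1)$, since it represents zero in $\ccyc{\C M^1}{1}$. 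Writing $(\pi_2 \grad \pi_2)(\tilde z) = (1-\lambda) w$ and replacing $\tilde z$ by $\tilde z - (1-\lambda)(s \grad s)(w)$, I obtain a lift of $\zeta$ that sits inside $\ker(\pi_2 \grad \pi_2) = i(\C L^1(H)) \grad \C T^1 + \C T^1 \grad i(\C L^1(H))$.

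On such a lift, every elementary tensor $y_0 \otimes y_1$ carries at least one factor in $i(\C L^1(H))$. Componentwise multiplication in $\C T^1 \subseteq \C L(H) \times \C M^1$ then forces $b(y_0 \otimes y_1) = [y_0, y_1]$ to have vanishing $\C M^1$-slot and $\C L(H)$-slot of the form $[T, A]$ with $T \in \C L^1(H)$ and $A \in \C L(H)$; cyclicity of the Fredholm trace gives $\T{Tr}([T, A]) = 0$, so $\T{Tr}(b(\tilde z)) = 0$ as required. The principal obstacle is the lifting step, that is, passing from $\zeta \in \ccyc{\C T^1}{1}$ up to a concrete element of $\ker(\pi_2 \grad \pi_2)$ inside $\C T^1 \grad \C T^1$ rather than merely of its cyclic quotient; this is exactly what the section $s$ resolves, by allowing any initial lift to be corrected by $(1-\lambda)(s \grad s)(w)$.
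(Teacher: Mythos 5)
Your proof is correct, but it takes a genuinely different route from the paper's. The paper works entirely at the level of cyclic chains: it introduces auxiliary maps $\alpha : \ccyc{\C M^1}{1} \to \C L(H)$, $(x_0,x_1) \mapsto [Px_0P,Px_1P]$, and $\beta : \ccyc{\C T^1}{1} \to \C L^1(H)$, $((S_0,x_0),(S_1,x_1)) \mapsto [S_0,S_1] - [Px_0P,Px_1P]$, notes that $\T{Tr}\circ\beta = 0$ (writing $S_i = Px_iP + T_i$ with $T_i \in \C L^1(H)$ exhibits $\beta$ as a sum of commutators each with a trace-class factor), and then verifies the pointwise identity $\pi_1 \circ b = \beta + \alpha \circ (\pi_2)_*$; since $(\pi_2)_*$ vanishes on $\T{Ker}_1(\pi_2)$, the vanishing of $\T{Tr}\circ b$ is immediate, with no lifting required. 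You instead lift back from $\ccyc{\C T^1}{1}$ to the projective tensor product $\C T^1 \grad \C T^1$, use the Banach-space splitting $\C T^1 = i(\C L^1(H)) \oplus s(\C M^1)$ to correct the lift so that it lies in $\ker(\pi_2\grad\pi_2)$, identify that kernel with $i(\C L^1(H))\grad\C T^1 + \C T^1 \grad i(\C L^1(H))$, and conclude termwise. Both arguments ultimately invoke only the cyclicity of the trace against a trace-class factor; the paper's version is shorter and avoids handling the projective tensor product explicitly, while your version makes the role of the continuous linear section $s$ and the induced splitting of the chain groups more structurally visible, which is arguably more transparent as to why the factorization exists.
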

\begin{proof}
Let $\alpha : \ccyc{\C M^1}{1} \to \C L(H)$ denote the map induced by 
\[
\alpha : (x_0,x_1) \mapsto [Px_0P,Px_1P]
\]
Let $\beta : \ccyc{\C T^1}{1} \to \C L^1(H)$ be the map induced by 
\[
\beta : ((S_0,x_0),(S_1,x_1)) \mapsto [S_0,S_1] - [Px_0P,Px_1P]  
\]
We then have 
\[
\T{Tr} \circ \beta = 0
\]
Furthermore we can express the composition of the Hochschild boundary 
\[
b : \ccyc{\C T^1}{1} \to \ccyc{\C T^1}{0} = \C T^1
\]
with the projection $\pi_1 : \C T^1 \to \C L(H)$ as the sum 
\[
\pi_1 \circ b = \beta + \alpha \circ (\pi_2)_*
\]
It follows that the map 
\[
\T{Tr} \circ b : \T{Ker}_1(\pi_2) \to \cc
\]
vanishes, which in turn implies the desired result. 
\end{proof}

We can now prove that $\tau_1 : HC_1(\C M^1) \to \cc$ factorizes through the
relative continuous cyclic homology group $\hcyc{\pi_2}{0}$. 

\begin{prop}\label{cocycdiff}
The character 
\[
\tau_1 : HC_1(\C M^1) \to \cc
\]
factorizes as 
\[
\tau_1 = - T \circ \partial 
\]
Here $\partial : HC_1(\C M^1) \to HC_0(\pi_2)$ is the boundary map associated
with the homomorphism $\pi_2 : \C T^1 \to \C M^1$. 
\end{prop}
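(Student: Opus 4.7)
The plan is to evaluate both sides on a representative $1$-cycle in the Connes complex and compare the resulting scalars. I fix $(x_0,x_1) \in \ccyc{\C M^1}{1}$ with $b(x_0,x_1)=[x_0,x_1]=0$, so that it represents a class in $\hcyc{\C M^1}{1}$. Using the continuous linear section $s : \C M^1 \to \C T^1$, $x \mapsto (PxP,x)$, together with the isomorphism $HC_*(\pi_2) \cong HC_*(\T{Ker}(\pi_2))$ recorded above, the connecting map admits the familiar ``lift-and-differentiate'' description: $\partial(x_0,x_1)$ is represented by
\[
b(s(x_0),s(x_1)) \,=\, [s(x_0),s(x_1)] \,=\, \bigl([Px_0P,Px_1P],\,[x_0,x_1]\bigr) \,=\, \bigl([Px_0P,Px_1P],\,0\bigr),
\]
viewed as an element of $\T{Ker}(\pi_2) \cong \C L^1(H)$.

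Writing $x_i = \mat{cc}{a_i & b_i \\ c_i & d_i}$ with $b_i,c_i \in \C L^2(H)$, the first technical point is to confirm that $[Px_0P,Px_1P]$ is genuinely trace class. The $(1,1)$-block of the cycle relation $[x_0,x_1]=0$ reads $[a_0,a_1] = b_1c_0 - b_0c_1$, which exhibits $[a_0,a_1]$ as a sum of products of Hilbert--Schmidt operators. Hence
\[
T\circ\partial(x_0,x_1) \,=\, \T{Tr}[Px_0P,Px_1P] \,=\, \T{Tr}(b_1c_0 - b_0c_1).
\]

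The last step is a direct block-matrix expansion of $\tau_1(x_0,x_1)$. With $F_U = 2P - I$ one computes $[F_U,x_i] = \mat{cc}{0 & 2b_i \\ -2c_i & 0}$, so that $F_U[F_U,x_0][F_U,x_1]$ is the diagonal operator $\mat{cc}{-4b_0c_1 & 0 \\ 0 & 4c_0b_1}$. Taking the trace together with the normalization yields
\[
\tau_1(x_0,x_1) \,=\, -\tfrac{1}{4}\T{Tr}\bigl(F_U[F_U,x_0][F_U,x_1]\bigr) \,=\, \T{Tr}(b_0c_1) - \T{Tr}(c_0b_1) \,=\, \T{Tr}(b_0c_1 - b_1c_0),
\]
where cyclicity of the trace is used in the last equality. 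Comparison with the formula for $T\circ\partial$ obtained above produces precisely $\tau_1 = -T\circ\partial$.

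The main obstacle is really just careful bookkeeping: keeping the sign conventions in Connes' cyclic quotient, the Hochschild differential, the continuous section $s$, and the connecting homomorphism of \eqref{eq:banexa} all mutually consistent. Once the representatives are aligned, the desired identity collapses to the observation that $b_1c_0 - b_0c_1$ and $b_0c_1 - b_1c_0$ differ by a sign.
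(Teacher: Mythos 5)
Your argument is correct and is essentially the paper's proof: the paper likewise computes the connecting map $\partial$ via the linear lift $s(x)=(PxP,x)$, represents it on cycles by $b\circ s_*$, and identifies $\tau_1$ with the resulting commutator trace. The one stylistic difference is that the paper records the key identity at the chain level, $\tau_1 = -\T{Tr}\circ(b\circ s_* - s_*\circ b)$ on all of $C^{\lambda}_1(\C M^1)$; since $(bs_*-s_*b)(x_0,x_1)=\bigl([Px_0P,Px_1P]-P[x_0,x_1]P,\,0\bigr)$ has first component equal to the compression of $b_1c_0-b_0c_1$ to the upper-left corner, the trace-class property there holds term by term without invoking the cycle relation. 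That is a small convenience when a cycle is a finite sum of elementary tensors rather than a single one as in your presentation, but your restriction to $Z^{\lambda}_1$ handles this correctly by linearity, and your explicit block-matrix computation of both sides is exactly the verification the paper leaves implicit.
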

\begin{proof}
We start by noting that the composition 
\[
- \T{Tr} \circ ( b \circ s_* -s_* \circ b) : C_1^{\lambda}(\C M^1) \to \cc
\]
coincides with the map
\[
\tau_1 : C_1^{\lambda}(\C M^1) \to \cc
\]
Here $s_* : \ccyc{\C M^1}{1} \to \ccyc{\C T^1}{1}$ denotes the map induced by
the continuous linear right inverse $s : \C M^1 \to \C T^1$ of $\pi_2 : \C T^1
\to \C M^1$. Since the boundary map
\[
\partial : HC_1(\C M^1) \to HC_0(\pi_2) \cong HC_0(\T{Ker}(\pi_2)) 
\]  
is induced by the map 
\[
b \circ s_* : \kcyc{\C M^1}{1} \to \T{Ker}(\pi_2)
\]
we get the desired result.
\end{proof}  

As an immediate consequence of Theorem \ref{cocycdiff} we get the following 
factorization result for the composition of the relative Chern character and the
homomorphism $\tau_1 : HC_1(\C M^1) \to \cc$.  

\begin{corollary}\label{partch}
The composition  
\[
\tau_1 \circ \E{ch}_2^{\E{rel}} : K_2^{\E{rel}}(\C M^1) \to \cc 
\]
factorizes as 
\[
\tau_1 \circ \E{ch}_2^{\E{rel}} 
= - T \circ \E{TR} \circ L \circ \partial \circ h_2
\]
Here the maps 
\[
\E{TR} : \lim_{m \to \infty} HC_1(M_m(\pi_2)) \to HC_1(\pi_2)
\]
and 
\[
L : H_2(R(\pi_2)) \to \lim_{m \to \infty} HC_1(M_m(\pi_2))
\]
are relative versions of the generalized trace and the logarithm. The map 
\[
\partial : H_2(R(\C T_1)) \to H_1(R(\pi_2))
\]
is the boundary map associated with the simplicial map $R(\pi_2) : R(\C T^1)
\to R(\C M^1)$.
\end{corollary}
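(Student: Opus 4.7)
The plan is to combine the definition $\E{ch}_2^{\E{rel}} = \E{TR} \circ L \circ h_2$ of the relative Chern character with the factorization $\tau_1 = -T \circ \partial$ established in Theorem \ref{cocycdiff}, and then pass the boundary map $\partial$ through $\E{TR}$ and $L$ by naturality. Substituting the two identities one into the other yields
\[
\tau_1 \circ \E{ch}_2^{\E{rel}} \;=\; \tau_1 \circ \E{TR} \circ L \circ h_2 \;=\; -\,T \circ \partial \circ \E{TR} \circ L \circ h_2,
\]
so the remaining task is to rewrite the middle composition $\partial \circ \E{TR} \circ L$ as $\E{TR} \circ L \circ \partial$, where the rightmost $\partial$ now denotes the simplicial boundary associated with $R(\pi_2) : R(\C T^1) \to R(\C M^1)$.

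To produce the required relative versions of $\E{TR}$ and $L$, I would apply the universal formulas for the generalized trace and for the logarithm \eqref{eq:logarithm} levelwise, to the kernel of $(\pi_2)_* : \ccyc{\C T^1}{*} \to \ccyc{\C M^1}{*}$ and to the simplicial kernel of $R(\pi_2)$ respectively. The identification of $HC_*(\pi_2)$ with the homology of the kernel complex, afforded by the continuous linear section $s : \C M^1 \to \C T^1$, together with Theorem \ref{isomrelg} on the simplicial side, both extracts the relative maps featured in the statement and, by construction, fits them into naturality squares with $\partial$. The square for $\E{TR}$ is essentially formal, since the matrix trace is natural in the algebra and therefore commutes with the formation of kernel complexes.

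The main obstacle will be the naturality square for $L$, since the logarithm formula \eqref{eq:logarithm} is nonlinear in $\sigma$. I would handle this by lifting a simplicial cycle in $R(\C M^1)$ along the section $s$, applying the logarithm formula to the lift in $R(\C T^1)$, and checking that the Hochschild boundary of the resulting cyclic chain represents, in the kernel complex, the image of the simplicial boundary under the relative logarithm. As in the proof of Theorem \ref{cocycdiff}, the failure of $s$ to be multiplicative should contribute only Hochschild boundaries, which vanish on passage to cyclic homology. Once both naturality squares are established, a direct diagram chase starting from the displayed equality above yields the claimed factorization.
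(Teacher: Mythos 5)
Your approach is correct and is essentially the one the paper leaves implicit (the paper offers no proof, merely calling the corollary an ``immediate consequence'' of Theorem \ref{cocycdiff}). Substituting $\text{ch}_2^{\text{rel}} = \text{TR}\circ L\circ h_2$ and $\tau_1 = -T\circ\partial$, and then commuting the cyclic boundary $\partial : HC_1(\C M^1)\to HC_0(\pi_2)$ through $\text{TR}\circ L$ via naturality, is exactly what is needed; the kernel-complex identification on the cyclic side (from the linear section $s$) and Theorem \ref{isomrelg} on the simplicial side produce the relative maps appearing in the statement.

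One detail you should fix: to lift a simplicial $2$-cycle from $R(\C M^1)$ to $R(\C T^1)$ you should not lift ``along the section $s$'' --- $s$ is a continuous linear section of $\pi_2$, so $s\circ\sigma$ need not be invertible and does not define an element of $R(\C T^1)$. The correct device, which the paper supplies just before the proof of Theorem \ref{mainthe}, is that $R(\pi_2)$ is surjective in every simplicial degree (a consequence of the surjectivity of $M_m(\pi_2)$ and the fact that elements of $R(\C M^1)_n$ lie in the identity component); the boundary map $\partial : H_2(R(\C M^1))\to H_1(R(\pi_2))$ then uses an arbitrary such lift and is independent of the choice. With that replacement, the rest of your argument --- naturality of the matrix trace, and compatibility of the logarithm formula \eqref{eq:logarithm} with the Hochschild/simplicial boundaries up to terms that die in cyclic homology --- goes through as you describe. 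Your remark attributing the correction terms to ``the failure of $s$ to be multiplicative'' is slightly off-target in this setting (that mechanism is what drives Theorem \ref{cocycdiff}, not the naturality square for $L$), but it does not affect the validity of the plan.
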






\subsection{Determinants and the relative logarithm}\label{detrellog} 
In this part of the paper we start relating the multiplicative character with
the Fredholm determinant. The main result of this section is thus Lemma
\ref{fredholm}, where we show that the composition of the relative logarithm 
\[
L : H_1(R(\pi_2)) \to \lim_{m\to
  \infty} HC_0(M_m(\pi_2))
\]
with the exponential of the trace  
\[
\exp \circ (- T \circ \T{TR}) : \lim_{m\to \infty} HC_0(M_m(\pi_2)) \to \cc^*
\]
is a Fredholm determinant. 

\begin{definition}\label{smoothlone}
Let $\C G$ denote the operators of determinant class. By a smooth map $\sigma
: \Delta^n \to \C G$ we will understand an element in the group 
\[
\sigma \in GL\big(C^\infty(\Delta^n, \C L(H))\big)
\]
such that   
\[
\sigma - 1 \in  M_\infty\big(C^\infty(\Delta^n, \C L^1(H))\big) 
\] 
\end{definition}

Define the map 
\[
\tilde{L} : \zz [R(\C T^1)_1 \times_{R(\C M^1)_1} R(\C T^1)_1]
=\check{\T{C}}_{10}(R(\pi_2)) \to \cc     
\]
by the assignment 
\[
\tilde{L} : (\sigma_1,\sigma_2) \mapsto 
              -\T{Tr} \int_0^1\frac{d (\sigma_1 \sigma_2^{-1})}{dt}
                                 \cdot \sigma_2 \sigma_1^{-1} dt
\] 
Here the trace 
\[
\T{Tr} : M_\infty(\C L^1(H)) \to \cc
\]
is induced by 
\[
\arr{ccc}{
\T{Tr} : x \mapsto \sum_{i=1}^m \T{Tr}(x_{ii})
\sep{for all} 
x \in M_m\big(\C L^1(H)\big)
}
\]

\begin{lemma}\label{tildegamma}
The map  
\[
\tilde{L} : \zz[R(\C T^1)_1 \times_{R(\C M^1)_1} R(\C T^1)_1] \to \cc
\]
passes to a map on homology 
\[
\tilde{L} : H_1(\E{Coker}(\delta_{R(\pi_2)})) \to \cc
\]
making the diagram 
\begin{equation}\label{eq:tilgamgam}
 \dgARROWLENGTH=0.5\dgARROWLENGTH
\begin{diagram}
\node{H_1(R(\pi_2))} 
\arrow{s,l}{-T\circ \E{TR}\circ L} 
     \node{H_1(\E{Coker}(\delta_{R(\pi_2)}))} 
     \arrow{w,t}{\varepsilon} 
     \arrow{sw,r}{\tilde{L}} \\
\node{\cc} 
     \node{} 
\end{diagram}
\end{equation}
commute.  
\end{lemma}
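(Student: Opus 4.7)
The plan is to establish the chain-level identity
\[
\tilde L(\sigma_1,\sigma_2) \;=\; (-T\circ \T{TR}\circ L)\bigl(\varepsilon(\sigma_1,\sigma_2)\bigr)
\]
for every $(\sigma_1,\sigma_2) \in \cech{C}_{10}(R(\pi_2))$, where $\varepsilon$ sends $(\sigma_1,\sigma_2)$ to $\sigma_1 - \sigma_2 \in \T{Ker}(R(\pi_2))_1$. This single equality will deliver both conclusions of the lemma: since the right hand side factors through constructions already known to descend to homology (the Hurewicz map together with the relative logarithm of Section \ref{relchernsec} and the trace of Lemma \ref{reltrace}), and since $\varepsilon\circ\delta = 0$ and $\varepsilon$ commutes with the simplicial differential $d$, the identity forces $\tilde L$ to vanish on $\delta$-boundaries and to be a $d$-cocycle on $\T{Coker}(\delta_{R(\pi_2)})$, and it forces \eqref{eq:tilgamgam} to commute tautologically.

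To produce the identity I would set $U := \sigma_1\sigma_2^{-1}$, $K := U - 1$, and $A_i := \dot\sigma_i\sigma_i^{-1}$. The hypothesis $R(\pi_2)(\sigma_1) = R(\pi_2)(\sigma_2)$ forces $\pi_2(U)=1$, so $K$ is a smooth function on $\Delta^1$ with values in the trace-class operators $\C L^1(H)$. Consequently $\dot U\, U^{-1} = \dot K(1+K)^{-1}$ is pointwise trace class, which is exactly what makes $\tilde L(\sigma_1,\sigma_2)$ a well-defined complex number; and the integrand $A_1 - A_2$ of $L(\sigma_1-\sigma_2)$ is also pointwise trace class since $\pi_2(A_1-A_2) = 0$.

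The heart of the argument is a short product-rule computation producing the pointwise identity
\[
\dot U\, U^{-1} \;=\; A_1 - U A_2 U^{-1} \;=\; (A_1 - A_2) - [K,\, A_2 U^{-1}].
\]
Although $A_2 U^{-1}$ is only bounded, the factor $K$ lies in $\C L^1(H)$, so both $K A_2 U^{-1}$ and $A_2 U^{-1} K$ are of trace class and cyclicity of $\T{Tr}$ gives $\T{Tr}[K, A_2 U^{-1}] = 0$ pointwise in $t \in \Delta^1$. Applying $\T{Tr}$ to both sides, integrating over $\Delta^1$, and recalling the definitions of $\tilde L$, $L$, $\T{TR}$, and $T$, then yields the desired identity.

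The main obstacle is the careful accounting of Schatten ideals: every appearance of $\T{Tr}$ and every use of cyclicity must be legitimized by verifying that the operator in question actually lives in $\C L^1(H)$, and the naive symmetry between $\sigma_1$ and $\sigma_2$ is broken by having to multiply through $U^{-1}$. All such verifications are ultimately governed by the one clean observation that $U - 1$ takes values in $\C L^1(H)$, which explains why it is the pair $(\sigma_1,\sigma_2)$ and not the individual factors that is relevant to the construction.
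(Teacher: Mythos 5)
Your proof is correct and follows essentially the same route as the paper: establish the chain-level equality $\tilde L(\sigma_1,\sigma_2) = (-T\circ \T{TR}\circ L)(\sigma_1-\sigma_2)$ by reducing it to a pointwise trace identity $\T{Tr}(\dot U U^{-1}) = \T{Tr}(A_1 - A_2)$ (with $U = \sigma_1\sigma_2^{-1}$), justified by cyclicity of the trace and the key observation that $U-1$ is trace-class valued. The paper organizes the cyclicity step a bit differently — setting $\alpha+1=\sigma_2\sigma_1^{-1}=U^{-1}$ and spelling out the conjugation $(\alpha+1)^{-1}A_2(\alpha+1)$ rather than isolating a commutator $[K,A_2U^{-1}]$ — but this is a cosmetic rearrangement of the same computation.
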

\begin{proof}
We show that 
\[
\tilde{L} : \zz[R(\C T^1)_1 \times_{R(\C M^1)_1} R(\C T^1)_1] \to \cc 
\]
agrees with the map 
\[
-T \circ \T{TR} \circ L \circ \varepsilon :
\zz[R(\C T^1)_1 \times_{R(\C M^1)_1} R(\C T^1)_1] \to \cc
\]
See Section \ref{ccech}. Here $\T{TR} \circ L : \T{Ker}_1(R(\pi_2))  \to
\T{Ker}(\pi_2)$ is induced by the restriction of $\T{TR} \circ L : \zz[R(\C
T^1)_1] \to \C T^1$ to the kernel of $R(\pi_2) : \zz[R(\C T^1)_1] \to
\zz[R(\C M^1)_1]$.

This essentially amounts to a proof of the equality 
\[
\tilde{L}(\sigma_1,\sigma_2) 
= \T{Tr} \big(
\int_0^1 \frac{d\sigma_2}{dt}\cdot \sigma_2^{-1} dt -
\int_0^1 \frac{d\sigma_1}{dt}\cdot \sigma_1^{-1} dt 
 \big)
\]
for all $(\sigma_1,\sigma_2) \in R(\C T^1)_1 \times_{R(\C M^1)_1} R(\C
T^1)_1$.

Now, since the trace 
\[
\T{Tr} : M_m(\C L^1(H)) \to \cc
\]
is continuous and linear we have 
\[
\begin{split}
\tilde{L}(\sigma_1,\sigma_2) 
& = -\T{Tr} \int_0^1\frac{d (\sigma_1 \sigma_2^{-1})}{dt}
\cdot \sigma_2 \sigma_1^{-1} dt \\
& = -\int_0^1 \T{Tr} \big(
\frac{d (\sigma_1 \sigma_2^{-1})}{dt}
\cdot \sigma_2 \sigma_1^{-1}\big) dt
\end{split}
\]
Using the Leibnitz rule for the differential operator $\frac{d}{dt}$ we get  
\[
\frac{d(\sigma_1 \sigma_2^{-1})}{dt} 
= \frac{d\sigma_1}{dt}\cdot \sigma_2^{-1} 
-\sigma_1 \sigma_2^{-1}\cdot  \frac{d\sigma_2}{dt} \cdot \sigma_2^{-1}  
\]
Defining $\alpha \in M_\infty\big(C^{\infty}(\Delta^1,\C L^1(H))\big)$ by
$\alpha + 1 = \sigma_2 \sigma_1^{-1}$ we have
\[
\begin{split}
& -\T{Tr} \big( \frac{d (\sigma_1 \sigma_2^{-1})}{dt}
                             \cdot \sigma_2 \sigma_1^{-1}\big) \\
&\qquad = -\T{Tr} \big( \frac{d\sigma_1}{dt} \cdot \sigma_1^{-1} 
-(\alpha+1)^{-1}\frac{d\sigma_2}{dt} \cdot \sigma_2^{-1}(\alpha+1) \big)  \\ 
&\qquad = \T{Tr} \big((\alpha+1)^{-1} \frac{d\sigma_2}{dt}\cdot \sigma_2^{-1}
\alpha\big)
+ \T{Tr} \big( (\alpha+1)^{-1} \frac{d\sigma_2}{dt} \cdot \sigma_2^{-1} 
-\frac{d\sigma_1}{dt} \cdot \sigma_1^{-1} \big) \\ 
&\qquad =  \T{Tr} \big(\frac{d\sigma_2}{dt} \cdot \sigma_2^{-1} -
\frac{d\sigma_1}{dt} \cdot \sigma_1^{-1} \big) 
\end{split}
\]
proving the desired result. 
\end{proof}

In the next Lemma we will show that the composition of the map 
\[
\tilde{L} : H_1(\T{Coker}(\delta_{R(\pi_2)})) \to \cc
\]
with the exponential 
\[
\exp : \cc \to \cc^*
\]
can be expressed using a Fredholm determinant. 

By \cite[Proposition $5.4$]{ConKar} the first algebraic $K$-group of the
Banach algebra $\C T^1$ vanishes. In particular, it follows from the exact
sequence \eqref{eq:banexa} that the groups $E(\C M^1)$ and $GL_0(\C M^1)$
coincide. Here $GL_0(\C M^1)$ denotes the connected component of the
identity. We therefore have a commutative diagram of simplicial maps 
\[
 \dgARROWLENGTH=0.6\dgARROWLENGTH
\begin{diagram}
\node{R_n(\C T^1)} \arrow{e,t}{\theta} \arrow{s,l}{R(\pi_2)}
     \node{E(\C T^1)^n} \arrow{s,r}{E(\pi_2)} \\
\node{R_n(\C M^1)} \arrow{e,b}{\theta}
     \node{E(\C M^1)^n}  
\end{diagram}
\]
and by consequence an induced map  
\[
\theta : H_1(\T{Coker}(\delta_{R(\pi_2)})) \to H_1(\T{Coker}(\delta_{E(\pi_2)}))
\]
Furthermore we let 
\[
D_* : H_1(\T{Coker}(\delta_{E(\pi_2)})) \to H_1(\T{Coker}(\delta_{E(q)}))
\]
denote the map induced by the commutative diagram 
\[
\dgARROWLENGTH=0.6\dgARROWLENGTH
\begin{diagram}
\node{E(\C T^1)} \arrow{e,t}{E(\pi_1)} \arrow{s,l}{E(\pi_2)}
     \node{E(\C L(H))} \arrow{s,r}{E(q)} \\
\node{E(\C M^1)} \arrow{e,b}{E(R)}
     \node{E(\C C^1(H))}  
\end{diagram}
\]
of group homomorphisms. 
 
\begin{lemma}\label{fredholm}
The  diagram 
\begin{equation}\label{eq:fredcomm}
 \dgARROWLENGTH=0.5\dgARROWLENGTH
\begin{diagram}
\node{H_1(\E{Coker}(\delta_{R(\pi_2)}))}
\arrow{s,r}{\tilde{L}} \arrow{e,t}{D_* \circ \theta} 
           \node{H_1(\E{Coker}(\delta_{E(q)}))}
           \arrow{s,r}{\E{det}}  \\
\node{\cc} \arrow{e,b}{\E{exp}} 
     \node{\cc^*}
\end{diagram}
\end{equation}
is commutative. 
\end{lemma}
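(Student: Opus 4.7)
Since both sides of \eqref{eq:fredcomm} are homomorphisms out of $H_1(\T{Coker}(\delta_{R(\pi_2)}))$ with image in $\cc^*$ (and the source already consists of classes of $1$-chains modulo the relations enforced by the cokernel and the boundary), it suffices to check the equality on a generator $(\sigma_1,\sigma_2) \in R(\C T^1)_1 \times_{R(\C M^1)_1} R(\C T^1)_1$. The plan is to compute both compositions on such a pair in closed form and then to invoke the standard integral formula for the Fredholm determinant together with the identity $\T{det}(AB) = \T{det}(BA)$.

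First I unpack the right-hand side $\T{det} \circ D_* \circ \theta$. Since $\sigma_i(\F 0) = 1$, the map $\theta$ sends $\sigma_i \in R_1(\C T^1)$ to $\sigma_i(\F 1)^{-1} \in E(\C T^1)$ (we are using $K_1(\C T^1) = 0$ from \cite[Proposition $5.4$]{ConKar} to identify $GL_0(\C T^1)$ with $E(\C T^1)$), and the hypothesis $\pi_2 \sigma_1 = \pi_2 \sigma_2$ guarantees that the pair $(\sigma_1(\F 1)^{-1}, \sigma_2(\F 1)^{-1})$ lies in the fiber product $E(\C T^1)\times_{E(\C M^1)} E(\C T^1)$. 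Applying $D_*$, i.e.\ $E(\pi_1)$ on each factor, and then the determinant gives
\[
\T{det} \circ D_* \circ \theta(\sigma_1,\sigma_2)
= \T{det}\big(\pi_1(\sigma_1(\F 1))^{-1} \cdot \pi_1(\sigma_2(\F 1))\big).
\]

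Next, for the left-hand side, by the computation carried out in the proof of Lemma \ref{tildegamma} (or directly from the definition) we have
\[
\tilde L(\sigma_1,\sigma_2)
= -\T{Tr} \int_0^1 \frac{d(\sigma_1\sigma_2^{-1})}{dt} \cdot (\sigma_1\sigma_2^{-1})^{-1}\, dt.
\]
The key observation is that $\gamma(t) := \pi_1(\sigma_1 \sigma_2^{-1})(t)$ is a smooth path in the group $\C G$ of operators of determinant class: indeed, $\pi_2(\sigma_1 \sigma_2^{-1}) = 1$ forces $\sigma_1\sigma_2^{-1} - 1$ to take values in $\T{Ker}(\pi_2) \cong M_\infty(\C L^1(H))$ in the sense of Definition \ref{smoothlone}, and moreover $\gamma(0) = 1$. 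The standard integral formula for the Fredholm determinant recalled in the introduction therefore gives
\[
\T{det}(\gamma(1)) = \exp\Big(\T{Tr} \int_0^1 \gamma'(t) \gamma(t)^{-1}\, dt\Big)
= \exp(-\tilde L(\sigma_1,\sigma_2)).
\]
Since $\gamma(1) = \pi_1(\sigma_1(\F 1)) \cdot \pi_1(\sigma_2(\F 1))^{-1}$, this reads
\[
\exp(\tilde L(\sigma_1,\sigma_2))
= \T{det}\big(\pi_1(\sigma_1(\F 1)) \cdot \pi_1(\sigma_2(\F 1))^{-1}\big)^{-1}.
\]

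The last step is to compare this with the right-hand side formula, which requires the identity $\T{det}(AB^{-1})^{-1} = \T{det}(A^{-1}B)$ whenever $AB^{-1} - 1 \in \C L^1(H)$. This is a well-known property of the Fredholm determinant (both multiplicativity on the determinant-class group $\C G$ and the equality $\T{det}(UV) = \T{det}(VU)$ when one factor is a perturbation of the identity by a trace-class operator; see \cite{Simon}). Applying this equality yields exactly
\[
\exp(\tilde L(\sigma_1,\sigma_2)) = \T{det}\big(\pi_1(\sigma_1(\F 1))^{-1} \cdot \pi_1(\sigma_2(\F 1))\big) = \T{det} \circ D_* \circ \theta(\sigma_1,\sigma_2),
\]
which is the commutativity of \eqref{eq:fredcomm}. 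I expect the only real care required to be in verifying the hypotheses on $\gamma$ needed to legitimately invoke the integral formula for $\T{det}$—that is, checking that the path lives in $\C G$ in the sense of Definition \ref{smoothlone}, with all terms matrix-valued in $M_\infty$—so that the trace-class integral and Leibniz manipulations go through.
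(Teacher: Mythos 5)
Your proof is correct and follows essentially the same route as the paper's: reduce to a generator $(\sigma_1,\sigma_2)$, observe that $\pi_1(\sigma_1\sigma_2^{-1})$ is a smooth path in $\C G$ starting at $1$, apply the integral formula for the Fredholm determinant to evaluate $\exp\circ\tilde L$, and compare with the endpoint determinant produced by $\T{det}\circ D_*\circ\theta$. The one small refinement you add is making explicit the final trace-class invariance $\T{det}(AB)=\T{det}(BA)$ needed to match the two orderings, which the paper leaves implicit.
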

\begin{proof}
We show that for each generator 
\[
(\sigma_1,\sigma_2) 
\in R(\C T^1)_1 \times_{R(\C M^1)_1} R(\C T^1)_1
\]
we have 
\[
(\exp \circ \tilde{L})(\sigma_1,\sigma_2) 
= \T{det} \big(E(\pi_1)(\sigma_1(\F 1)^{-1}) \cdot E(\pi_1)(\sigma_2(\F 1))\big)
\]
Indeed the smooth map $\sigma_1\sigma_2^{-1} : \Delta^1 \to GL(\C T^1)$ is of
the form $\sigma_1\sigma_2^{-1} = (\alpha,1)$ with $\alpha : \Delta^1 \to \C
G$ being a smooth map in the sense of Definition \ref{smoothlone}. It follows
that
\[
(\exp \circ \tilde{L})(\sigma_1,\sigma_2)
= \exp\big(- \T{Tr} \int_0^1 \frac{d \alpha}{ dt} \cdot \alpha^{-1}dt \big) 
\]  
But this is precisely the Fredholm determinant of the endpoint 
\[
\T{det}(\alpha(\F 1)^{-1}) 
= \T{det}\big( E(\pi_1)(\sigma_2(\F 1)) \cdot E(\pi_1)(\sigma_1(\F 1)^{-1})\big)
\]
proving the desired result. 
\end{proof}





\subsection{Proof of the main result}\label{proof}
In this section we combine the results of Lemma \ref{cocycdiff} and Lemma
\ref{fredholm}, in order to obtain a proof of our main theorem: the
determinant invariant equals the multiplicative character up to a canonical
homomorphism on algebraic $K$-theory. 

We start by making a couple of useful observations. Let 
\[
\theta : H_2(R(\C M^1)) \to H_2(E(\C M^1))
\]
denote the homomorphism induced by the simplicial map 
\[
\theta : R_n(\C M^1) \to E(\C M^1)^n
\]
The composition 
\[
\theta \circ h_2 : K_2^{\T{rel}}(\C M^1) \to H_2(E(\C M^1))
\]
then corresponds to the map 
\[
\theta : K_2^{\T{rel}}(\C M^1) \to K_2(\C M^1)
\]
under the identification $K_2(\C M^1) \cong H_2(E(\C M^1))$. 

Furthermore, the simplicial map 
\[
R(\pi_2) : R(\C T^1) \to R(\C M^1)
\]
is surjective in each degree. This follows from \cite[Proposition
$3.4.3$]{Black} by the surjectivity of the map 
\[
M_m(\pi_2) : M_m(C^\infty(\Delta^n, \C T^1) ) 
\to M_m(C^\infty(\Delta^n, \C M^1)) 
\]
and since each element $\sigma \in R(\C M^1)_n$ can be represented by an
element
\[
\sigma \in GL_0\big(M_m(C^\infty(\Delta^n,\C M^1))\big) 
\]
By Theorem \ref{isomrelg} the map 
\[
\varepsilon : H_*(\T{Coker}(\delta_{R(\pi_2)})) \to H_*(R(\pi_2)) 
\] 
thus becomes an isomorphism.  

We are now ready to prove the main result of the paper. 

\begin{prop}\label{mainthe}
The composition of the universal multiplicative character
\[
\C M_U : K_2(\C M^1) \to \cc/(2\pi i)\zz
\]
with the exponential function 
\[
\exp : \cc/(2\pi i)\zz \to \cc^*
\]
coincides with the composition of the homomorphism on algebraic $K$-theory 
\[
R_* : K_2(\C M^1) \to K_2(\C C^1(H))
\]
induced by the unital algebra homomorphism
\[
\begin{array}{ccc}
R : \C M^1 \to \C C^1(H) 
& \quad 
& x\mapsto q(PxP)
\end{array}
\]
and the universal determinant invariant
\[
d : K_2(\C C^1(H)) \to \cc^*
\]
That is 
\[
\exp \circ \C M_U = d \circ R_*
\]
\end{prop}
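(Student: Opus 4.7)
The plan is to trace $\exp \circ \C M_U([x])$ through the factorization of Corollary \ref{partch} together with the bridging results Lemmas \ref{tildegamma} and \ref{fredholm}, and then to identify the output with $d(R_*[x])$ by a naturality chase. Given $[x] \in K_2(\C M^1)$, the identification $K_2(\C M^1) \cong \T{Coker}(v)$ coming from the exact sequence \eqref{eq:xseqk} lets us choose a lift $\tilde x \in K_2^{\T{rel}}(\C M^1)$ with $\theta(\tilde x) = [x]$ under the identification $K_2(\C M^1) \cong H_2(E(\C M^1))$. Corollary \ref{partch} then gives
\[
\C M_U([x]) \equiv -T\circ \T{TR} \circ L \circ \partial \circ h_2(\tilde x) \pmod{(2\pi i)\zz},
\]
and applying $\exp$ turns this congruence into an equality in $\cc^*$.

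Because $R(\pi_2)$ is surjective in each simplicial degree, Theorem \ref{isomrelg} supplies an isomorphism $\varepsilon : H_1(\T{Coker}(\delta_{R(\pi_2)})) \to H_1(R(\pi_2))$. By Lemma \ref{tildegamma} the map $-T \circ \T{TR} \circ L$ equals $\tilde L \circ \varepsilon^{-1}$ on $H_1(R(\pi_2))$, and by Lemma \ref{fredholm} the composite $\exp \circ \tilde L$ coincides with $\det \circ D_* \circ \theta$ on $H_1(\T{Coker}(\delta_{R(\pi_2)}))$. Combining these,
\[
\exp \circ \C M_U([x]) = \det \circ D_* \circ \theta \circ \varepsilon^{-1} \circ \partial \circ h_2(\tilde x).
\]

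It remains to identify the right hand side with $d(R_*[x])$. Feeding the commutative square \eqref{eq:commpisec} through the elementary-matrix functor and through the simplicial object $R(\cdot)$ yields a morphism of surjective simplicial/group maps running from $R(\pi_2)$ via $E(\pi_2)$ to $E(q)$, implemented by $\theta$ followed by the square $(E(\pi_1),E(R))$. Naturality of the algebraic \v{C}ech construction $\T{Coker}(\delta_{(\cdot)})$, of the comparison map $\varepsilon$, and of the connecting homomorphism in simplicial and group homology then assembles into the identity
\[
D_* \circ \theta \circ \varepsilon^{-1} \circ \partial_{R(\pi_2)} = \varepsilon^{-1} \circ \partial_{E(q)} \circ E(R)_* \circ \theta.
\]
Since $\theta \circ h_2 : K_2^{\T{rel}}(\C M^1) \to H_2(E(\C M^1))$ is the composite $K_2^{\T{rel}}(\C M^1) \xrightarrow{\theta} K_2(\C M^1) \cong H_2(E(\C M^1))$ sending $\tilde x$ to $[x]$, and $E(R)_*$ agrees with $R_*$ under this identification, the chain evaluates to $\varepsilon^{-1} \circ \partial_{E(q)} \circ R_*([x])$; applying $\det$ gives $d(R_*[x])$ by definition of the universal determinant invariant. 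The main obstacle is organising the two species of boundary map (on the simplicial sets $R(\cdot)$ and on the classifying-type sets $E(\cdot)^{\bullet}$), the two $\varepsilon$-isomorphisms, and the Hurewicz map $h_2$ into a single coherent naturality diagram; the genuinely analytic content has already been packaged into Lemmas \ref{tildegamma} and \ref{fredholm}, so what remains is a careful diagram chase whose key enabling fact is the degreewise surjectivity of $R(\pi_2)$.
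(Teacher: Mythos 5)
Your proposal is correct and follows essentially the same route as the paper's own proof: invoke Corollary \ref{partch} to factor $\tau_1 \circ \T{ch}_2^{\T{rel}}$ through the relative boundary map, pass through Lemmas \ref{tildegamma} and \ref{fredholm} to arrive at $\det \circ D_* \circ \theta$, and finish by naturality of the connecting homomorphism to rewrite the composite as $d \circ R_* \circ \theta$. The only cosmetic difference is that you keep the two $\varepsilon$-isomorphisms explicit where the paper suppresses them as identifications.
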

\begin{proof}
From the long exact sequence \eqref{eq:xseqk} we see that 
\[
\theta : K_2^{\T{rel}}(\C M^1) \to K_2(\C M^1)
\]
implements the isomorphism 
\[
\T{Coker}(v)\cong K_2(\C M^1)
\]
By construction of the multiplicative character it is therefore enough to
prove the equality 
\[
\exp \circ \tau_1 \circ \T{ch}_2^{\T{rel}} = d \circ R_* \circ \theta 
\]
See Section \ref{multchar}. We evolve on the left hand side. The
identifications 
\[
\arr{ccc}{
H_1(R(\pi_2)) \cong H_1\big(\T{Coker}(\delta_{R(\pi_2)})\big)  
\sep{and}
H_1(E(q)) \cong H_1\big(\T{Coker}(\delta_{E(q)})\big)
}
\]
as well as the identifications 
\[
\arr{ccc}{
K_2(\C M^1) \cong H_2(E(\C M^1)) \sep{and}
K_2(\C C^1(H)) \cong H_2(E(\C C^1(H)))
}
\]
will be suppressed. Furthermore we will make use of the results in Corollary
\ref{partch}, Lemma \ref{tildegamma} and Lemma \ref{fredholm} as well as of the
observations preceeding the statement of the Theorem. We get 
\[
\begin{split}
\exp \circ \tau_1 \circ \T{ch}_2^{\T{rel}} 
& = \exp \circ ( - T \circ \T{TR} \circ L \circ \partial \circ h_2) \\
& = \exp \circ \tilde{L} \circ \partial \circ h_2 \\
& = \T{det} \circ D_* \circ \theta \circ \partial \circ h_2 \\
& = \T{det} \circ \partial \circ E(R)_* \circ \theta \circ h_2 \\
& = \T{det} \circ \partial \circ R_* \circ \theta 
\end{split}
\]
But this is precisely the composition 
\[
d \circ R_* \circ \theta : K_2^{\T{rel}}(\C M^1) \to \cc^*
\]
proving the theorem. 
\end{proof}

The main theorem can be refined in the following way. Let 
\[
\rho_F : \C A \to \C M^1
\]
be the unital algebra homomorphism associated with an odd unital $2$-summable
Fredholm module $(F,H)$ over the $\cc$-algebra $\C A$. We then define a
$\cc$-subalgebra of $\C L(H)$ by
\[
\C E = \{P\rho_F(a)P + T \, |\, T \in \C L^1(H) \T{ and } a \in \C A\}
\] 
Letting $\C B$ denote the quotient of $\C E$ by $\C L^1(H)$ we get an exact
sequence 
\[
\begin{CD}
X_F : 0 @>>> \C L^1(H) @>i>> \C E @>\pi>> \C B @>>> 0 
\end{CD}
\]
We then have the algebra homomorphism $\iota : \C B \to \C C^1(H)$ which is
induced by the inclusion $\iota : \C E \to \C L(H)$ and the surjective algebra
homomorphism  
\[
\arr{ccc}{
R_F : \C A \to \C B 
& \q & 
a \mapsto q(P\rho_F(a)P)
}
\]
Here $q : \C L(H) \to \C C^1(H)$ denotes the quotient map.

\begin{corollary}
The diagram 
\[
\dgARROWLENGTH=0.5\dgARROWLENGTH 
\begin{diagram}
\node{K_2(\C A)}
\arrow{e,t}{(R_F)_*} \arrow{s,l}{\C M_F}
     \node{K_2(\C B)} 
     \arrow{s,r}{d_{(X_F,\iota)}} \\
\node{\cc/(2\pi i)\zz} 
\arrow{e,b}{\exp}
     \node{\cc^*}
\end{diagram}
\]
is commutative. 
\end{corollary}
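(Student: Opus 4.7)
The plan is to deduce this refinement from Theorem \ref{mainthe} by functoriality of algebraic $K$-theory. Unwinding the definitions, the multiplicative character of the Fredholm module satisfies $\C M_F = \C M_U \circ (\rho_F)_*$ (Section \ref{multchar}), and the determinant invariant of the pair $(X_F,\iota)$ satisfies $d_{(X_F,\iota)} = d \circ \iota_*$ (Section \ref{detkdef}), where $\iota : \C B \to \C C^1(H)$ is the algebra homomorphism induced by the inclusion $\iota : \C E \hookrightarrow \C L(H)$. Both vertical arrows in the square in question thus factor through the universal picture at $\C M^1$ and $\C C^1(H)$, so it suffices to fit everything into a larger diagram that reduces to Theorem \ref{mainthe}.

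The key step is the algebra-level identity
\[
\iota \circ R_F = R \circ \rho_F : \C A \to \C C^1(H).
\]
To check it, observe that $R_F(a)$ is by construction the class of $P \rho_F(a) P \in \C E$ in $\C B = \C E / \C L^1(H)$. The homomorphism $\iota : \C B \to \C C^1(H)$ sends this class to the class of $P \rho_F(a) P$ in $\C L(H)/\C L^1(H)$, that is, to $q(P \rho_F(a) P) = R(\rho_F(a))$, where $R : \C M^1 \to \C C^1(H)$ is the homomorphism of the main theorem. Applying the functor $K_2$ yields $\iota_* \circ (R_F)_* = R_* \circ (\rho_F)_*$ as maps $K_2(\C A) \to K_2(\C C^1(H))$.

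The corollary then follows from the computation
\[
d_{(X_F,\iota)} \circ (R_F)_*
= d \circ \iota_* \circ (R_F)_*
= d \circ R_* \circ (\rho_F)_*
= \exp \circ \C M_U \circ (\rho_F)_*
= \exp \circ \C M_F,
\]
where the third equality is Theorem \ref{mainthe} and the last equality is the defining factorization of $\C M_F$ through $\C M_U$. No serious obstacle is expected; the only point requiring care is to interpret $R_F$ correctly as a homomorphism into $\C B$ rather than directly into $\C C^1(H)$, so that the factorization $d_{(X_F,\iota)} = d \circ \iota_*$ of the determinant invariant via the induced map on $\C B$ can be invoked.
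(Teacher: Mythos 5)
Your proof is correct and follows essentially the same route as the paper: it establishes the key identity $\iota \circ R_F = R \circ \rho_F$ and then invokes Theorem \ref{mainthe}, the definitions of the two invariants, and functoriality of $K_2$. You have simply spelled out the diagram chase that the paper leaves implicit.
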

\begin{proof}
The result follows from Theorem \ref{mainthe} and the identity 
\[
R \circ \rho_F = \iota \circ R_F : \C A \to \C C^1(H)
\]
using the definition of the invariants and the functoriality of algebraic
$K$-theory. 
\end{proof}





\end{document}